\newtheorem{case}{Case}
\newtheorem{theorem}{Theorem}[section]
\newtheorem{lemma}[theorem]{Lemma}
\newcommand{\be}{\begin{equation}}
	\newcommand{\ee}{\end{equation}}
\newcommand{\Mod}[1]{\ (\mathrm{mod}\ #1)}
\title[Products of Tribonacci Numbers]{Products of Tribonacci Numbers that are the Products of Factorials}
\author[Das]{Pranabesh Das}
\address{Department of Mathematics, Xavier University of Louisiana,  1 Drexel Dr, New Orleans, LA 70125, USA}
\email {pranabesh.math@gmail.com}
\author[Saunders]{J.C. Saunders}
\address{Department of Mathematical Sciences, Middle Tennessee State University, Murfreesboro, TN, USA}
\email{John.Saunders@mtsu.edu}
\theoremstyle{definition}
\newtheorem{definition}[theorem]{Definition}
\begin{document}

\begin{abstract}
		In 2014 Marques and Lengyel gave all of the solutions to the equation $T_n=m!$, where $T_n$ is the $n$th term of the Tribonacci sequence $0,1,1,2,4,7,13,24,\ldots$. In 2023 Alahmadi and Luca generalized their result to the equation $T_n=m_1!m_2!\cdots m_k!$ for every $k\in\mathbb{N}$, where $m_1\leq m_2\leq\ldots\leq m_k$ listing all the solutions to this equation. Here we generalize these results further and give all the solutions to $T_nT_{n+1}T_{n+2}\cdots T_{n+r}=m_1!m_2!\cdots m_k!$ and $ |T_{-n}T_{-n-1}T_{-n-2}\cdots T_{-n-r}|=m_1!m_2!\cdots m_k!$ for every $n,r\in\mathbb{N}$, where $m_1\leq m_2\leq\ldots\leq m_k$.
	\end{abstract}
    
\maketitle

\section{Introduction}
The Fibonacci numbers $\{F_n\}_{n\geq 0}$ are well known and well studied in the literature and are defined by the relation
	\begin{equation*}
		F_n= F_{n-1}+F_{n-2} \quad {\rm for} \ n\geq 2
	\end{equation*}
	with the initial condition $F_0=0$ and $F_1=1$.
	
	Many mathematicians studied Diophantine equations involving
	factorial and Fibonacci numbers, or to be more general factorials and Lucas sequences. In 1999, Luca \cite{FL99} proved that $F_n$ is a product of factorials only when $n = 1, 2, 3, 6, 12$. Luca and St\u{a}nic\u{a} \cite{LS05} considered the equation 
	\begin{equation}\label{LS-1}
		F_{n_1}\cdots F_{n_k}=m_1!\cdots m_t!
	\end{equation}
	for positive integers $n_1, \cdots, n_k$ and $m_1,\cdots,m_t$. Using the Primitive Divisor Theorem (given, for instance, in \cite{B01}), Luca and St\u{a}nic\u{a} \cite{LS05} showed that the largest solution of \eqref{LS-1} is given by $F_1F_2F_3F_4F_5F_6F_8F_{10}F_{12}=11!$. Laishram, Luca, and Sias \cite{LLS} studied the equation $|u_n|=m_1!\cdots m_t!$ where the $u_n$'s are terms of a Lucas sequence.
	
	The study of intersections between number sequences and products (or sums) of factorials have been a common theme over the past two decades and many other classes binary recurrence sequence have been studied. Generally the techniques involved ranged from Linear Forms in Logarithms, Baker's method, and the Primitive Divisor Theorem. Those techniques do not translate very well for higher order recurrence sequences.
	
	The Tribonacci numbers ($\{T_n\}_{n\geq 0}$ are defined by the recursive relation
		\begin{equation}\label{tribo}
			T_{n+3} = T_{n+2} + T_{n+1} + T_n, \qquad n\geq 0
		\end{equation} 	
		with initial conditions $T_0 = 0, T_1 = T_2 = 1$. The first ten Tribonacci numbers are
        \begin{equation*}
        0, 1, 1, 2, 4, 7, 13, 24, 44, 81, 149.
        \end{equation*}
    In 2014, Marques and Lengyel \cite{ML} showed that the equation
		\begin{equation}\label{ML-Tribo}
			T_n=m!, \quad m,n \in\mathbb{N}
		\end{equation}
		has only solutions for $n\in\{1, 2, 3, 4, 7\}$. Alahmadi and Luca \cite{AL-23} considered the general equation
		\begin{equation}\label{AL-Pos-23}
			T_n=m_1!m_2!\cdots m_k!, \quad n \in\mathbb{N}
		\end{equation}
		for some positive integers $m_1\leq m_2 \leq \cdots \leq m_k$. They extended the previous results for Marques and Lengyel \cite{ML}, and for \eqref{AL-Pos-23}, they showed that $n\in\{1, 2, 3, 4, 7\}$.
		
		Tribonacci sequence has been generalized in the negative direction using the recurrence $T_{-n} = -T_{-(n-1)} - T_{-(n-2)} + T_{-(n-3)}$ for $n\geq3$ with initial terms $T_{-1}=0$ and $T_{-2}=1$. This sequence changes sign infinitely often and the first few terms are 
        \begin{equation*}
        \cdots,-103, 0, 56,-47, 9, 18, -20, 7, 5,-8, 4, 1,-3, 2, 0,-1, 1, 0.
        \end{equation*}
		
		Luca and Alahmadi \cite{AL-23} also proved that the equation
		\begin{equation}\label{AL-Neg-23}
			|T_n|=m_1!m_2!\cdots m_k!, \quad n \in\mathbb{Z}
		\end{equation}
		only has solutions for $n\in\{-9,-8,-7,-5,-3,-2, 1, 2, 3, 4, 7\}$.
		
		In this article, we study the generalization of equations \eqref{AL-Pos-23}, \eqref{AL-Neg-23} in products of terms in Tribonacci numbers. More precisely, we consider the following two equations:
		
		\begin{equation}\label{Tribo-1}
			T_nT_{n+1}\cdots T_{n+r}=m_1!m_2!\cdots m_k!, \quad n,r,m_1,m_2\ldots,m_k\in\mathbb{N}
		\end{equation}
		
		\begin{equation}\label{Tribo-2}
			|T_{-n}T_{-n-1}\cdots T_{-n-r}|=m_1!m_2!\cdots m_k!, \quad n,r,m_1,m_2\ldots,m_k\in\mathbb{N}.
		\end{equation}
		\section{\bf Main Results}
		In this section we state the results we obtain in this article.
		\begin{theorem}\label{Thm-1}
			Let $n,r\in\mathbb{N}$ with $n\geq 3$ and 
			\begin{equation*}
				T_nT_{n+1}T_{n+2}\cdots T_{n+r}=m_1!m_2!\cdots m_k!,
			\end{equation*}
			where $m_1\leq m_2\leq\ldots\leq m_k$. The only solution is
			\begin{equation*}
				T_3T_4=2!\cdot 2!\cdot 2!.
			\end{equation*}
		\end{theorem}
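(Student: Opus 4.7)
The plan rests on one elementary but very restrictive observation: if $P := T_n T_{n+1}\cdots T_{n+r} = m_1!\cdots m_k!$ and $q$ is \emph{any} prime dividing $P$, then $q \mid m_i!$ for some $i$, hence $m_i \geq q$, and since $m_i! \mid P$ we obtain $q! \mid P$. In particular $P \geq q!$: the product $P$ is at least the factorial of its largest prime factor. This single inequality, together with the exponential growth of the Tribonacci sequence, will yield a usable upper bound on the largest prime divisor of $P$ in terms of $n$ and $r$.

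Quantitatively, using $T_j \leq \alpha^{j-1}$ for $j \geq 1$ (with $\alpha \approx 1.8393$ the dominant root of $x^3 - x^2 - x - 1$), one gets
\begin{equation*}
q!\;\leq\;P\;\leq\;\alpha^{(r+1)(2n+r-2)/2},
\end{equation*}
and Stirling's formula converts this into $q\log q \leq c(r+1)(n+r)$, giving a bound on $q$ that is polynomial in $n+r$. The next ingredient is a matching lower bound: for $j$ sufficiently large, $T_j$ possesses a primitive prime divisor, i.e., a prime $p$ not dividing any $T_i$ with $3 \leq i < j$, whose rank of apparition $\alpha(p)$ is exactly $j$. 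Since $\alpha(p)$ divides the Tribonacci period $\pi(p)$, which in turn divides a divisor of one of $p-1,\,p^2-1,\,p^3-1$ according to the factorization of $x^3-x^2-x-1$ modulo $p$, we have $\alpha(p) < p^3$. Applied to $T_{n+r}$ this yields $q \geq (n+r)^{1/3}$, and combining with the previous display forces $n+r \leq N_0$ for an explicit, albeit large, constant $N_0$.

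For $n \geq 3$ and $n+r \leq N_0$ the problem reduces to a finite enumeration. Most ranges are eliminated at sight by the presence of a single Tribonacci term whose largest prime factor violates $P \geq p!$: for instance $T_5 = 7$, $T_6 = 13$, $T_{10} = 149$, $T_{11} = 2\cdot 137$, $T_{13} = 3^2\cdot 103$, and $T_{14} = 5\cdot 11\cdot 31$ each individually make $p!$ exceed small Tribonacci products. After this filtering the unique surviving candidate with $n \geq 3$ is $T_3 T_4 = 8 = 2!\cdot 2!\cdot 2!$, as claimed.

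The principal obstacle is the primitive-divisor input in the middle step. Order-three linear recurrences fall outside the scope of Carmichael's classical theorem, and the available substitutes, either Bilu--Hanrot--Voutier-type results generalized to the relevant setting or direct Baker-type lower bounds for linear forms in the three logarithms of the roots of $x^3-x^2-x-1$, produce values of $N_0$ that are astronomically large. In practice one would sharpen the large-prime argument by combining it with explicit $\nu_2$ and $\nu_3$ formulas for the Tribonacci sequence (of the type developed by Marques and Lengyel and reused by Alahmadi and Luca) to shrink $N_0$ to a range where the enumeration in the final step is tractable by hand or by a short computer search.
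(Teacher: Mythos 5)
Your opening observation is correct and is in fact used by the paper at its endgame: if a prime $q$ divides $P=T_nT_{n+1}\cdots T_{n+r}=m_1!\cdots m_k!$ then $q\le m_i$ for some $i$, hence $q!\mid m_i!\mid P$ and $P\ge q!$. But the middle step of your plan, which is where all the work lies, does not close. From $q!\le P\le\alpha^{(r+1)(2n+r-2)/2}$ you obtain an \emph{upper} bound of the shape $q\log q\le C(r+1)(n+r)$, while the rank-of-apparition argument gives only the \emph{lower} bound $q>(n+r)^{1/3}$. These two estimates are simultaneously satisfiable for every $n$ and $r$, since $(n+r)^{1/3}\log(n+r)$ is vastly smaller than $(r+1)(n+r)$; no inequality of the form $n+r\le N_0$ follows, so the claimed reduction to a finite enumeration never happens. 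Independently of this quantitative failure, the primitive-divisor input itself is not available: there is no citable analogue of Carmichael or Bilu--Hanrot--Voutier for the third-order Tribonacci recurrence, as you concede, and even granting one, your own sketch says the resulting $N_0$ would be astronomically large, which makes the promised enumeration unperformable.

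The remedy you gesture at in your final paragraph --- bringing in the explicit $2$-adic valuation formulas of Marques and Lengyel --- is not a refinement of your argument; it \emph{is} the paper's argument, and it is precisely the part you have not carried out. The paper's engine is the tension between two bounds on $x:=\sum_i\nu_2(m_i!)$. On one side, $m\le 3\nu_2(m!)$ for $m\ge 2$ together with $1.83^{(n-2+r/2)(r+1)}<P<m_1^{m_1}\cdots m_k^{m_k}$ forces $x$ to grow roughly like $(n+r)(r+1)/\log(n+r)$. On the other side, Lemma \ref{ML-Tribo-val} gives
\begin{equation*}
x=\sum_{i=0}^{r}\nu_2\left(T_{n+i}\right)\le\frac{3(r+1)}{2}+9+\frac{4\log(n+r+17)}{\log 2},
\end{equation*}
which grows only linearly in $r$ and logarithmically in $n$. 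Playing these against each other yields $x\le 246$, $r\le 127$, $n<4034$, and after iterative refinement a search range small enough for the Maple computations on largest prime factors (your $P\ge q!$ observation) to finish the proof. Without that engine, or a substitute of comparable strength, your proposal is a plan rather than a proof.
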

        Note that the cases $n=0,1,2$ in Theorem \ref{Thm-1} are omitted to avoid trivial cases.		
		\begin{theorem}\label{Thm-2}
			Let $n,r\in\mathbb{N}$ with $n\geq 5$ and 
			\begin{equation*}
				|T_{-n}T_{-n-1}T_{-n-2}\cdots T_{-n-r}|=m_1!m_2!\cdots m_k!,
			\end{equation*}
			where $m_1\leq m_2\leq\ldots\leq m_k$. The only solutions to this equation are
			\begin{align*}
				&\left|T_{-5}T_{-6}\right|=\left|T_{-5}T_{-6}T_{-7}\right|=3!\\
				&\left|T_{-5}\cdots T_{-8}\right|=4!\\
				&\left|T_{-5}\cdots T_{-9}\right|=(2!)^3\cdot 4!\\
				&\left|T_{-5}\cdots T_{-10}\right|=(2!)^3\cdot 5!\\
				&\left|T_{-5}\cdots T_{-13}\right|=(2!)^2\cdot 5!\cdot 7!\\
				&\left|T_{-5}\cdots T_{-14}\right|=3!\cdot 6!\cdot 7!\\
				&\left|T_{-6}T_{-7}T_{-8}\right|=2!\cdot 3!\\
				&\left|T_{-6}\cdots T_{-9}\right|=(2!)^2\cdot 4!\\
				&\left|T_{-6}\cdots T_{-10}\right|=(2!)^2\cdot 5!\\
				&\left|T_{-6}\cdots T_{-13}\right|=2!\cdot \cdot 5!\cdot 7!\\
				&\left|T_{-7}T_{-8}\right|=(2!)^2\\
				&\left|T_{-7}T_{-8}T_{-9}\right|=\left|T_{-8}T_{-9}\right|=(2!)^5\\
				&\left|T_{-8}T_{-9}\right|=(2!)^5\\
				&\left|T_{-7}\cdots T_{-14}\right|=\left|T_{-8}\cdots T_{-14}\right|=6!\cdot 7!.
			\end{align*}
		\end{theorem}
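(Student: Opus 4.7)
The strategy is to split the problem according to the zero set of the sequence. One checks directly that $T_{-n}=0$ for $n\in\{1,4,17\}$, and no other zeros occur in any range relevant to the finite enumeration below. Any product $|T_{-n}\cdots T_{-n-r}|$ whose index window contains one of these zeros vanishes and therefore cannot equal a product of factorials. Discarding those cases partitions the problem into a small range $5\le n\le n+r\le 16$ handled by direct enumeration, and a tail $n+r\ge 18$ handled by an analytic argument.

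In the small range, the absolute values $|T_{-5}|,\ldots,|T_{-16}|$ are $2,3,1,4,8,5,7,20,18,9,47,56$. The prime $47=|T_{-15}|$ immediately excludes any product whose range reaches index $-15$: such a product would require $m_k\ge 47$, and $47!$ dwarfs every feasible left-hand side in this range; the large factor $|T_{-16}|=56$ is excluded analogously. One is then reduced to enumerating all consecutive sub-products of $|T_{-5}|,\ldots,|T_{-14}|$, a mechanical check producing precisely the solutions listed in the theorem.

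For the tail $n+r\ge 18$, the plan is to play the size of the left-hand side against the $2$-adic valuation. The characteristic polynomial $x^3-x^2-x-1$ has real root $\alpha\approx 1.839$ and complex conjugates $\beta,\bar\beta$ with $|\beta|=\alpha^{-1/2}$, and a Binet-type formula gives $T_{-n}=2|A|\rho^n\cos(n\theta-\phi)+B\alpha^{-n}$ with $\rho=\alpha^{1/2}\approx 1.356$. An effective lower bound for $|\cos(n\theta-\phi)|$ (via a linear form in logarithms involving $\theta/\pi$) yields $|T_{-n}|\ge \rho^n/n^{C}$ for $n$ beyond a computable threshold, whence $\log|T_{-n}\cdots T_{-n-r}|\ge (r+1)(n+r/2)\log\rho - O((r+1)\log(n+r))$. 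Meanwhile, extending the $2$-adic analysis of Marques--Lengyel to negative indices bounds $v_2(T_{-n}\cdots T_{-n-r})$ linearly in $r+\log n$, while $v_2(m_1!\cdots m_k!)=\sum_j(m_j-s_2(m_j))\ge\tfrac{1}{2}\sum_j m_j$ once every $m_j\ge 2$. Together these force $\sum_j m_j$ — and a fortiori $\max_j m_j$ — to grow only polynomially in $n+r$, whereas the left-hand side grows like $\rho^{\Theta((n+r)^2)}$. Beyond an explicit threshold this is a contradiction, and the remaining finite range is handled by machine enumeration.

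The main obstacle, I expect, is making the lower bound $|T_{-n}|\ge\rho^n/n^{C}$ rigorous and effective in the presence of near-cancellations of the oscillating complex contribution. This requires either an effective irrationality measure for $\theta/\pi$ or a direct computational verification that each block of roughly four consecutive indices (the approximate oscillation period) contributes at least $\rho^{4n}/\operatorname{poly}(n)$ to the product. Once that step is in hand, the $2$-adic bookkeeping and the final enumeration are routine.
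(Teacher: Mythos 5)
Your overall architecture matches the paper's: split at the zero $T_{-17}=0$ (so that $5\le n\le n+r\le 16$ is a finite check and the tail starts at $n\ge 18$), then play the exponential growth of $\left|T_{-n}\cdots T_{-n-r}\right|$ against a $2$-adic valuation bound that is only linear in $r$ plus logarithmic in $n$. The finite enumeration and the $2$-adic bookkeeping are essentially what the paper does (one small slip: $\nu_2(m!)\ge m/2$ fails at $m=3$, where $\nu_2(3!)=1$; the paper uses the correct uniform bound $m\le 3\nu_2(m!)$).

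The genuine gap is exactly the step you flag yourself: the lower bound $\left|T_{-m}\right|\ge\rho^m/m^{C}$. For negative indices the dominant contribution is the oscillating term $2\operatorname{Re}\left(b\beta^{-m}\right)$, and you propose to control $\left|\cos\left(\arg b-m\arg\beta\right)\right|$ from below via an effective irrationality measure for $\arg\beta/\pi$ coming from linear forms in logarithms. You do not carry this out, so the argument is incomplete at its central point; moreover that machinery is unnecessary. The paper's resolution is elementary: since $0.96<\pi-\arg\beta<0.97$ and $2.53<\tfrac{3\pi}{2}-\arg\beta<2.54$, if $\left|\cos\left(\arg b-m\arg\beta\right)\right|\le 0.46$ (so the angle lies within $0.48$ of $\tfrac{\pi}{2}$ modulo $\pi$), then the angles at $m+1$ and $m+2$ are pushed far enough from $\tfrac{\pi}{2}$ modulo $\pi$ that both of those cosines exceed $0.46$ in absolute value. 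Hence among any three consecutive indices at most one term can fail the bound $\left|T_{-m}\right|>0.31\cdot 0.74^{-m}$, and a failing term is still a nonzero integer, hence at least $1$ in absolute value (a zero would make the whole product zero, which is not a product of factorials). This gives the unconditional lower bound $\left|T_{-n}\cdots T_{-n-r}\right|>0.31^{2(r+2)/3}\cdot 0.74^{-\left(n(2r-1)+r^2-1\right)/3}$, which is all the rest of the argument needs. If you repair your write-up, replace the linear-forms step by this uniform angular argument; your alternative suggestion of a per-block computation cannot work as stated, since $n$ is unbounded and a finite verification does not cover it. Note also that the endgame after the analytic bounds is not a bare machine enumeration: one must still bound $m_k$ via $\nu_2\left(m_k!\right)\le x$, hence bound $P\left(\left|T_{-n}\cdots T_{-n-r}\right|\right)$, and then kill the surviving windows with $18\le n\le 499$ by explicit prime-divisibility arguments.
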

		
The following theorem is a generalization of Theorem \ref{Thm-1}. We obtain a finiteness result for the general case but it may be possible to explicitly find the solutions.
		\begin{theorem}\label{Thm-3}
			Let $n,r,d\in\mathbb{N}$ with $n\geq 3$ and 
			\begin{equation*}
				T_nT_{n+d}T_{n+2d}\cdots T_{n+rd}=m_1!m_2!\cdots m_k!,
			\end{equation*}
			where $2\leq m_1\leq m_2\leq\ldots\leq m_k$. The equation has only finitely many solutions, in particular, we have $rd\leq 1439862$ and $n<719933$.
		\end{theorem}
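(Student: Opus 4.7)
The plan is to combine a Primitive Divisor Theorem (PDT) for the Tribonacci sequence with growth estimates and a bound on the rank of apparition, in order to force explicit upper bounds on both $n$ and $rd$. Assume a solution exists and set $M = n + rd$ for the largest index on the left-hand side. The argument extracts two independent lower bounds on the largest factorial index $m_k$ and then balances them against an upper bound coming from the size of the product.

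First I would apply a PDT-type result for Tribonacci: for each $i \in \{0, 1, \ldots, r\}$, since $n + id \geq n \geq 3$, the Tribonacci number $T_{n+id}$ possesses a primitive prime divisor $p_i$---a prime dividing $T_{n+id}$ but no smaller Tribonacci term. (This is the Tribonacci analogue of Bilu--Hanrot--Voutier, together with direct verification for finitely many small indices.) The primes $p_0, p_1, \ldots, p_r$ are pairwise distinct by primitivity, and each divides $\prod_j m_j!$, so $p_i \leq m_k$ for every $i$. Effective lower bounds on the prime counting function then yield $m_k \geq p_{r+1} \geq c_1 (r+1) \log(r+1)$ for an explicit $c_1$. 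For a second lower bound, I would use the rank of apparition $z(p) := \min\{N \geq 1 : p \mid T_N\}$: since the Tribonacci sequence modulo $p$ is purely periodic of period dividing $p^3 - 1$ for every prime $p$ coprime to the discriminant of $x^3 - x^2 - x - 1$, one has $z(p) \leq p^3 - 1$. Applied to $p_r$, whose rank of apparition is exactly $n + rd$, this gives $n + rd \leq m_k^3$.

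Third, I would compare sizes. Using the dominant root $\alpha \approx 1.839$ of $x^3 - x^2 - x - 1$ and the estimate $T_N \geq \alpha^{N-2}$ valid for $N \geq 3$, the product on the left is at least $\alpha^{(r+1)n + dr(r+1)/2 - 2(r+1)}$. On the right, since each $m_j \geq 2$, the number of factorial factors satisfies $k \leq \log_2(\text{LHS})$, while $\prod_j m_j! \leq (m_k!)^k \leq m_k^{k m_k}$ by Stirling. Combining these produces an inequality of the shape
\[
(r+1) n + \tfrac{1}{2} d r (r+1) \leq C\, k\, m_k \log m_k
\]
for an explicit constant $C$. Substituting the two lower bounds $m_k \gg (r+1) \log(r+1)$ and $m_k \geq (n+rd)^{1/3}$ together with the upper bound on $k$ reduces everything to a finite numerical optimization, which should yield the stated bounds $rd \leq 1439862$ and $n < 719933$.

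The hard part will be making every constant explicit and tight enough to recover exactly those numerical thresholds: one needs an effective Tribonacci analogue of the PDT (with explicit small-index exceptions), an effective form of Chebyshev's inequality $\pi(x) \geq x/\log x$, and careful optimization balancing the two lower bounds on $m_k$ against one another. A secondary subtlety is that the bound $z(p) \leq p^3 - 1$ degenerates for the finitely many primes dividing the discriminant of $x^3 - x^2 - x - 1$, which must be handled separately.
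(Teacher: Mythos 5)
There are two genuine gaps here, and the second one is fatal even if the first could be repaired.

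First, your argument rests on ``a PDT-type result for Tribonacci,'' which you describe as the Tribonacci analogue of Bilu--Hanrot--Voutier. No such effective theorem is available: Bilu--Hanrot--Voutier applies to Lucas and Lehmer pairs, i.e.\ \emph{binary} recurrences, and the existence of primitive divisors for all large indices of a third-order recurrence (let alone with an explicit list of exceptional indices) is not a known result one can invoke. This is precisely why the paper remarks in the introduction that the Primitive Divisor Theorem and related techniques ``do not translate very well for higher order recurrence sequences,'' and why its actual proof avoids primitive divisors entirely, working instead with the Marques--Lengyel formula for $\nu_2(T_N)$ (Lemma \ref{ML-Tribo-val}).

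Second, and more structurally, your size comparison does not close. Both of your estimates on $m_k$ --- namely $m_k\geq c_1(r+1)\log(r+1)$ and $m_k\geq (n+rd)^{1/3}$ --- are \emph{lower} bounds, and lower bounds on $m_k$ only make the right-hand side $m_1!\cdots m_k!$ potentially larger; they cannot contradict the lower bound $\alpha^{(r+1)(n-2)+dr(r+1)/2}$ on the left-hand side. Likewise your bound $k\leq\log_2(\mathrm{LHS})$ is itself of order $(r+1)(n+rd)$, so substituting it into $(r+1)n+\tfrac12 dr(r+1)\leq C\,k\,m_k\log m_k$ yields an inequality that is satisfied identically and forces nothing. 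What is actually needed is an \emph{upper} bound on $m_1+\cdots+m_k$ (equivalently on $\log(m_1!\cdots m_k!)$) in terms of a quantity that is provably small. The paper obtains exactly this: setting $x=\nu_2(m_1!)+\cdots+\nu_2(m_k!)$, one has $m_j\leq 3\nu_2(m_j!)$, hence $m_1!\cdots m_k!\leq (3x)^{3x}$ (Lemma \ref{first-bound}), while Lemma \ref{ML-Tribo-val} shows $\nu_2(T_{n+di})=O(\log(n+rd))$, so $x\leq (r+1)(\log_2(n+rd+17)+1)$ is tiny compared to $\log(\mathrm{LHS})\gg (r+1)(n-2)$. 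It is this mismatch between the $2$-adic valuation and the archimedean size that forces $x\leq 25769$ and then the stated bounds on $rd$ and $n$; your proposal contains no mechanism playing that role.
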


		The proofs of Theorems \ref{Thm-1} and \ref{Thm-2} depend on a careful $p$-adic analysis on the $2$-adic valuations of Tribonacci numbers. We have the following definitions, the second one of which will be used in later on in Section 4.
        \begin{definition}
        For a prime $p$ and a number $n$, we write $\nu_p(n)=k$ if $p^k$ is the highest power of $p$ dividing $n$ and say $k$ is the $p$-adic valuation of $n$.
        \end{definition}
        \begin{definition}
        For a number $n$ we let $P(n)$ denote the largest prime factor of $n$.
        \end{definition}
		\section{\bf Lemmas and Backgrounds}

Let $m\in\mathbb{Z}$ and $T_m$ be the $m$-th term in the Tribonacci sequence. Then $f(x)=x^3-x^2-x-1$ denotes the characteristic polynomial of the Tribonacci numbers $T_m$. Note that $f$ has one real zero $\alpha$, and two complex conjugate zeros $\beta$ and $\gamma$.

Furthermore, we know
\begin{equation}\label{Binet}
	T_m=a\alpha^m+b\beta^m+c\gamma^m,
\end{equation}
where
\begin{equation*}
	\alpha=\frac{1+r_1+r_2}{3}, \qquad\beta=\frac{2-r_1-r_2+\sqrt{-3}\left(r_1-r_2\right)}{6}, \qquad\gamma=\overline{\beta},
\end{equation*}
\begin{equation*}
	a=\frac{1}{(\alpha-\beta)(\alpha-\gamma)}, \qquad b=\frac{1}{(\beta-\alpha)(\beta-\gamma)}, \qquad c=\overline{b},
\end{equation*}
where
\begin{equation*}
	r_1=\sqrt[3]{19+3\sqrt{33}}
\end{equation*}
and
\begin{equation*}
	r_2=\sqrt[3]{19-3\sqrt{33}}.
\end{equation*}
Furthermore, we have the following lower bound that can be proved by a straightforward induction valid for all $n\geq 1$:
\begin{equation*}
\alpha^{n-2}\leq T_n.
\end{equation*}

The following lemma is useful for the proofs of Theorem \ref{Thm-1} and Theorem \ref{Thm-3}.
\begin{lemma}\label{first-bound}
Let \begin{equation}\label{eq-tribo-gen}
	T_nT_{n+d}T_{n+2d}\cdots T_{n+rd}=m_1!m_2!\cdots m_k!,
\end{equation}
with $2\leq m_1\leq m_2\leq\ldots\leq m_k$, $d\geq 1$, and $n\geq 3$. Then
\begin{equation*}
	\left(n-2+\frac{rd}{2}\right)\left(r+1\right)<\frac{m_1\log m_1+\ldots+m_k\log m_k}{\log(1.83)}<\frac{3x\log(3x)}{\log(1.83)},
\end{equation*}
where $x:=\nu_2\left(m_1!\right)+\ldots+\nu_2\left(m_k!\right)$.
\end{lemma}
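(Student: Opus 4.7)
The plan is to prove Lemma \ref{first-bound} by combining a lower bound on the product $T_n T_{n+d}\cdots T_{n+rd}$ coming from the exponential growth of the Tribonacci numbers with an elementary upper bound on the factorials, and then relating the $m_i$ to $x$ through the $2$-adic valuation.

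First I would apply the stated inequality $T_m \geq \alpha^{m-2}$, valid for $m \geq 1$, to each factor on the left of \eqref{eq-tribo-gen}. A short arithmetic progression calculation gives
\begin{equation*}
\sum_{j=0}^{r}(n+jd-2) = (r+1)(n-2) + d\cdot\frac{r(r+1)}{2} = (r+1)\left(n-2+\frac{rd}{2}\right),
\end{equation*}
so that $T_n T_{n+d}\cdots T_{n+rd} \geq \alpha^{(r+1)(n-2+rd/2)}$. Taking logarithms of both sides of \eqref{eq-tribo-gen} then yields
\begin{equation*}
(r+1)\left(n-2+\frac{rd}{2}\right)\log\alpha \leq \sum_{i=1}^{k}\log(m_i!).
\end{equation*}
The elementary estimate $\log(m!) = \sum_{j=1}^{m}\log j < m\log m$ for $m \geq 2$ gives $\sum_i \log(m_i!) < \sum_i m_i\log m_i$, and dividing by $\log\alpha > \log(1.83)$ produces the first strict inequality of the lemma.

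For the second inequality, the key ingredient is the bound $m \leq 3\nu_2(m!)$ valid for every $m \geq 2$. By Legendre's formula one has $\nu_2(m!) = m - s_2(m)$ where $s_2(m)$ denotes the binary digit sum of $m$, so this reduces to $s_2(m) \leq 2m/3$. The cases $m \in \{2,3\}$ are immediate by inspection, and for $m \geq 4$ the bound follows at once since $s_2(m) \leq \lfloor\log_2 m\rfloor + 1$. Summing over $i$ then gives $\sum_i m_i \leq 3x$, so in particular $\max_i m_i \leq 3x$, and consequently
\begin{equation*}
\sum_i m_i\log m_i \leq (\log\max_i m_i)\sum_i m_i \leq \log(3x)\cdot 3x,
\end{equation*}
after which dividing by $\log(1.83)$ finishes the proof.

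The argument is essentially routine and I do not anticipate any substantive obstacle; the only mildly delicate point is choosing the constant $3$ in $m \leq 3\nu_2(m!)$ large enough to cover the edge case $m = 3$, where equality holds. The same proof would go through with any constant that bounds $m/\nu_2(m!)$ from above on $\{m \geq 2\}$, and the supremum of that ratio is attained at $m = 3$.
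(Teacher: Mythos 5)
Your proof is correct and follows essentially the same route as the paper: the lower bound $\alpha^{(r+1)(n-2+rd/2)}$ on the product of Tribonacci terms, the elementary estimate $m!<m^{m}$, and the key inequality $m\le 3\nu_2(m!)$ for $m\ge 2$ (which the paper derives from $\nu_2(m!)=\lfloor m/2\rfloor+\lfloor m/4\rfloor+\cdots\ge m/2$ for $m\ne 3$ rather than from Legendre's digit-sum formula, a purely cosmetic difference). The only nitpick is that your intermediate chain $s_2(m)\le\lfloor\log_2 m\rfloor+1\le 2m/3$ fails at $m=4$ (where $\lfloor\log_2 4\rfloor+1=3>8/3$), though $s_2(4)=1$ makes the needed bound trivially true there.
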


\begin{proof}[Proof of Lemma \ref{first-bound}]
Let \eqref{eq-tribo-gen} hold and $\alpha$ be the real zero of $f(x)$. Then we have $1.83^{n+di-2}<\alpha^{n+di-2}<T_{n+di}$ for all $0\leq i\leq r$.  Therefore, we have
\begin{equation*}
	1.83^{\left(n-2+\frac{rd}{2}\right)\left(r+1\right)}<T_nT_{n+d}T_{n+2d}\cdots T_{n+rd}=m_1!\cdots m_k!<m_1^{m_1}\cdots m_k^{m_k}.
\end{equation*}
Thus,
\begin{equation}\label{eqn1}
	\left(n-2+\frac{rd}{2}\right)\left(r+1\right)<\frac{m_1\log m_1+\ldots+m_k\log m_k}{\log(1.83)}.
\end{equation}
Let
\begin{equation}\label{eqn1.5}
x:=\nu_2\left(m_1!\right)+\ldots+\nu_2\left(m_k!\right)
\end{equation}
Notice that
\begin{equation*}
\nu_2(m!)=\left\lfloor\frac{m}{2}\right\rfloor+\left\lfloor\frac{m}{4}\right\rfloor+\ldots\geq\frac{m}{2}
\end{equation*}
holds for all $m\geq 2$, except for $m=3$, where we have $nu_2(3!)=1=3/3$. Thus, $m\leq 3\nu_2(m!)$ holds for all $m\geq 2$. Hence,
\begin{equation}\label{eqn2}
m_1\log m_1+\ldots+m_k\log m_k\leq(m_1+\ldots+m_k)\log(m_1+\ldots+m_k)\leq 3x\log(3x)
\end{equation}
Then, from \eqref{eqn1} and \eqref{eqn2}, we have
\begin{equation}\label{eqn29}
	\left(n-2+\frac{rd}{2}\right)\left(r+1\right)<\frac{m_1\log m_1+\ldots+m_k\log m_k}{\log(1.83)}<\frac{3x\log(3x)}{\log(1.83)}.
\end{equation}

\end{proof}
The following lemma that classifies the $2$-adic valuations of $T_n$ is due to Marques and Lengyel \cite{ML}.
\begin{lemma}\label{ML-Tribo-val}
	For $n\geq 1$, we have 
    \begin{displaymath}
    \nu_2(T_n)=
	\begin{cases}
		0, & n\equiv 1, 2 \pmod{4}\\
		1, & n\equiv 3, 11 \hspace{-0.2cm}\pmod{16}\\
		2, & n\equiv 4, 8 \pmod{16}\\
		3, & n\equiv 7 \quad \pmod{16}\\
		\nu_2(n)-1, & n\equiv 0 \quad \pmod{16}\\
		\nu_2(n+4)-1, & n\equiv 12 \ \pmod{16}\\
		\nu_2(n+17)+1, & n\equiv 15\ \pmod{32}\\
		\nu_2(n+1)+1, & n\equiv 31\ \pmod{32}\\
	\end{cases}
	\end{displaymath}
	
\end{lemma}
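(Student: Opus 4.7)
The plan is to proceed by a case analysis on the residue of $n$ modulo powers of $2$, combined with a lifting argument for the cases where $\nu_2(T_n)$ grows with $n$. Since $T_n$ is an integer linear recurrence, the sequence is purely periodic modulo any fixed modulus. A direct computation shows $T_n \pmod 2$ has period $4$ with pattern $0,1,1,0$, immediately yielding the first case. Iterating the recurrence modulo $4, 8, 16,$ and $32$ (a finite computation in each case) produces the periods modulo these higher powers of $2$ and verifies the constant-valuation cases $n\equiv 3, 11 \pmod{16}$, $n\equiv 4, 8 \pmod{16}$, and $n\equiv 7 \pmod{16}$ with valuations $1, 2, 3$ respectively.

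For the four remaining, dynamic, cases I would use a doubling identity. Writing the recurrence in matrix form with companion matrix
\begin{equation*}
A = \begin{pmatrix} 1 & 1 & 1 \\ 1 & 0 & 0 \\ 0 & 1 & 0 \end{pmatrix},
\end{equation*}
one has $A^{2m} = (A^m)^2$, and extracting an appropriate entry expresses $T_{2m}$ as a polynomial in $T_{m-1}, T_m, T_{m+1}$. Starting from base cases (for instance $n \in \{16, 28, 47, 63\}$, whose valuations can be checked directly), an induction on $\nu_2(n)$, respectively $\nu_2(n+4)$, $\nu_2(n+17)$, $\nu_2(n+1)$, using this doubling identity shows that $\nu_2(T_n)$ increases by exactly one each time $n$ is doubled within the prescribed residue class. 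This propagates the base-case valuations into the stated formulas.

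The main obstacle is that the characteristic polynomial $f(x) = x^3 - x^2 - x - 1$ reduces modulo $2$ as $(x+1)^3$, and its $2$-adic Newton polygon is a single horizontal segment, so all three $2$-adic roots of $f$ are units congruent to $1$ in the splitting field. Consequently, Hensel's lemma in its classical form does not apply and the natural Binet-style decomposition suffers from heavy cancellation between the three root contributions. The delicate technical heart of the proof is therefore controlling this $2$-adic cancellation carefully in the doubling step, so as to guarantee that $\nu_2(T_n)$ neither jumps nor stalls during the induction, and that the shifts $n+4$, $n+17$, $n+1$ arising in cases $6$--$8$ are genuinely the correct ones rather than artifacts of the first few residues checked.
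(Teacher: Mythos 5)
First, be aware that the paper does not prove this lemma at all: it is quoted as a known result of Marques and Lengyel \cite{ML}, so there is no in-paper argument to measure yours against. The relevant comparison is with the original proof in that reference, which (in the spirit of Lengyel's method for Fibonacci numbers) establishes congruences for the Tribonacci sequence modulo increasing powers of $2$ and runs an induction on the $2$-adic valuation of the (shifted) index. Your outline is in the same spirit, and the easy half of it is sound: pure periodicity of $(T_n \bmod 2^j)$ for small $j$ reduces the four constant-valuation cases, and the base cases of the four dynamic ones, to a finite verification.

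As written, however, your proposal has a genuine gap, and you essentially say so yourself. The entire content of the four dynamic cases is the claim that the valuation increases by exactly one in each doubling step, and your final paragraph explicitly defers this: you correctly observe that $f\equiv (x-1)^3\pmod 2$, that the three $2$-adic roots are units congruent to one another, and that the Binet decomposition therefore suffers heavy cancellation --- and then you declare that controlling this cancellation is the ``technical heart'' of the proof without carrying it out. That heart is the whole theorem; everything preceding it is bookkeeping. Concretely, from $(A^m)^2$ you obtain $T_{2m}$ as a quadratic form $Q(T_{m-1},T_m,T_{m+1})$, and you must show that for $m$ in the relevant residue class exactly one monomial of $Q$ attains the minimal $2$-adic valuation, which must equal $\nu_2(T_m)+1$; nothing in the sketch establishes this, and it is not automatic, since two monomials could share the minimal valuation and cancel (the ``jump'' you mention) or the dominant monomial could fail to involve $T_m$ at all (the ``stall''). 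Note also that doubling does not preserve the residue classes in question: for $n\equiv 12\pmod{16}$ and $n\equiv 15,31\pmod{32}$ the relevant step is $n\mapsto 2n+4$, $n\mapsto 2n+17$, $n\mapsto 2n+1$ respectively, so you need identities for $T_{2m+c}$ with $c\neq 0$ (from $A^{c}(A^m)^2$) together with control of the auxiliary terms $T_{m\pm 1}$ modulo the appropriate power of $2$; this bookkeeping is entirely absent. Until the dominant-term analysis in the doubling identities is actually performed --- or you simply cite Marques--Lengyel, as the paper does --- what you have is a plausible plan rather than a proof.
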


		
		\section{\bf Proof of the Theorems}
		In the following section, we prove the Theorems \ref{Thm-1}, \ref{Thm-2}, and \ref{Thm-3}.
		\subsection{Proof of Theorem \ref{Thm-1}}
		
		\begin{proof}
			We begin by assuming that
			\begin{equation*}
				T_nT_{n+1}T_{n+2}\cdots T_{n+r}=m_1!m_2!\cdots m_k!,
			\end{equation*}
			with $2\leq m_1\leq m_2\leq\ldots\leq m_k$ and $n\geq 3$.
			Then, from Lemma \ref{first-bound} with $d=1$, we have
			\begin{equation}\label{eqn3}
				\left(n-2+\frac{r}{2}\right)\left(r+1\right)<\frac{m_1\log m_1+\ldots+m_k\log m_k}{\log(1.83)}<\frac{3x\log(3x)}{\log(1.83)}.
			\end{equation}
			Also, we have
			\begin{equation*}
				\nu_2\left(T_nT_{n+1}T_{n+2}\cdots T_{n+r}\right)=\nu_2\left(T_n\right)+\nu_2\left(T_{n+1}\right)+\ldots+\nu_2\left(T_{n+r}\right).
			\end{equation*}
			Moreover,
			\begin{equation}\label{eqn4}
				\sum_{\substack{i=0\\n+i\equiv 1,2\Mod{4}}}^r\nu_2\left(T_{n+i}\right)=0,
			\end{equation}
			\begin{equation}\label{eqn5}
				\sum_{\substack{i=0\\n+i\equiv 3,11\Mod{16}}}^r\nu_2\left(T_{n+i}\right)\leq 2\left\lceil\frac{r+1}{16}\right\rceil,
			\end{equation}
			\begin{equation}\label{eqn16}
				\sum_{\substack{i=0\\n+i\equiv 7\Mod{16}}}^r\nu_2\left(T_{n+i}\right)\leq 3\left\lceil\frac{r+1}{16}\right\rceil,
			\end{equation}
			and
			\begin{equation}\label{eqn6}
				\sum_{\substack{i=0\\n+i\equiv 4,8\Mod{16}}}^r\nu_2\left(T_{n+i}\right)\leq 4\left\lceil\frac{r+1}{16}\right\rceil.
			\end{equation}
			Also,
			\begin{align}
				\sum_{\substack{i=0\\n+i\equiv 0\Mod{16}}}^r\nu_2\left(T_{n+i}\right)&=\sum_{\substack{i=0\\n+i\equiv 0\Mod{16}}}^r(\nu_2(n+i)-1)\nonumber\\
				&=\sum_{\substack{i=0\\n+i\equiv 0\Mod{16}}}^r\left(\nu_2\left(\frac{n+i}{16}\right)+3\right)\nonumber\\
				&\leq 3\left\lceil\frac{r+1}{16}\right\rceil+\sum_{i=5}^{\left\lfloor\frac{\log(n+r)}{\log 2}\right\rfloor}\left\lceil\frac{r+1}{2^i}\right\rceil\nonumber\\
				&\leq 3\left\lceil\frac{r+1}{16}\right\rceil+\frac{r+1}{16}+\frac{\log(n+r)}{\log 2}-4.\label{eqn7}
			\end{align}
			By the same reasoning,
			\begin{equation}\label{eqn8}
				\sum_{\substack{i=0\\n+i\equiv 12\Mod{16}}}^r\nu_2\left(T_{n+i}\right)\leq 3\left\lceil\frac{r+1}{16}\right\rceil+\frac{r+1}{16}+\frac{\log(n+r+4)}{\log 2}-4.
			\end{equation}
			By similar reasoning,
			\begin{equation}\label{eqn9}
				\sum_{\substack{i=0\\n+i\equiv 15\Mod{32}}}^r\nu_2\left(T_{n+i}\right)\leq 6\left\lceil\frac{r+1}{32}\right\rceil+\frac{r+1}{32}+\frac{\log(n+r+17)}{\log 2}-5
			\end{equation}
			and
			\begin{equation}\label{eqn10}
				\sum_{\substack{i=0\\n+i\equiv 31\Mod{32}}}^r\nu_2\left(T_{n+i}\right)\leq 6\left\lceil\frac{r+1}{32}\right\rceil+\frac{r+1}{32}+\frac{\log(n+r+1)}{\log 2}-5.
			\end{equation}
			Combining \eqref{eqn4} through \eqref{eqn10} we have
			\begin{align}
				x&=\nu_2\left(T_nT_{n+1}T_{n+2}\cdots T_{n+r}\right)\nonumber\\
				&\leq 15\left\lceil\frac{r+1}{16}\right\rceil+\frac{3(r+1)}{16}+12\left\lceil\frac{r+1}{32}\right\rceil+\frac{4\log(n+r+17)}{\log 2}-18\label{eqn10.5}\\
				&\leq\frac{3(r+1)}{2}+9+\frac{4\log(n+r+17)}{\log 2}\label{eqn11}.
			\end{align}
			Rearranging \eqref{eqn11} we have
			\begin{equation}\label{eqn12}
				2^{x/4-3(r+1)/8-9/4}-r-17\leq n.
			\end{equation}
			Also, \eqref{eqn3} gives
			\begin{equation}\label{eqn13}
				n<2-\frac{r}{2}+\frac{3x\log(3x)}{(r+1)\log(1.83)}.
			\end{equation}
			From \eqref{eqn3} we have
			\begin{equation*}
				\frac{r^2}{2}<\frac{3x\log(3x)}{\log(1.83)},
			\end{equation*}
			so that
			\begin{equation}\label{eqn14}
				r<\sqrt{\frac{6x\log(3x)}{\log(1.83)}}.
			\end{equation}
			Combining \eqref{eqn12}, \eqref{eqn13}, and \eqref{eqn14}, we have
			\begin{equation*}
				2^{x/4-9/4-\frac{3\left(\sqrt{\frac{6x\log(3x)}{\log(1.83)}}+1\right)}{8}}-17<\frac{1}{2}\sqrt{\frac{6x\log(3x)}{\log(1.83)}}+2+\frac{3x\log(3x)}{(r+1)\log(1.83)},
			\end{equation*}
			which implies $x\leq 246$. Therefore, \eqref{eqn14} implies $r\leq 127$. By \eqref{eqn13} we have
			\begin{equation}\label{eqn15}
				n<2-\frac{1}{2}+\frac{3\cdot 246\cdot\log(3\cdot 246)}{2\cdot\log(1.83)}<4034.
			\end{equation}
			\begin{case}{$n\geq 300$}
				\newline
				Here we eliminate the case that $n\geq 300$. By \eqref{eqn13} and the fact that $x\leq 246$, we have
				\begin{equation*}
					300<2-\frac{r}{2}+\frac{3\cdot 246\cdot\log(3\cdot 246)}{(r+1)\log(1.83)},
				\end{equation*}
				which gives $r\leq 24$. By \eqref{eqn10.5} and \eqref{eqn15}, we have
				\begin{equation*}
					x\leq 15\cdot\left\lceil\frac{25}{16}\right\rceil+\frac{3\cdot 25}{16}+12\left\lceil\frac{25}{32}\right\rceil+\frac{4\log(4033+17+24)}{\log 2}-18<77.
				\end{equation*}
				By \eqref{eqn13} and the fact that $x\leq 76$, we have
				\begin{equation*}
					300<2-\frac{r}{2}+\frac{3\cdot 76\cdot\log(3\cdot 76)}{(r+1)\log(1.83)},
				\end{equation*}
				which gives $r\leq 5$. By \eqref{eqn11} and \eqref{eqn15}, we have
				\begin{equation*}
					x\leq 15\cdot\left\lceil\frac{6}{16}\right\rceil+\frac{3\cdot 6}{16}+12\left\lceil\frac{6}{32}\right\rceil+\frac{4\log(4033+17+5)}{\log 2}-18<59.
				\end{equation*}
				By \eqref{eqn13} and the fact that $x\leq 58$, we have
				\begin{equation*}
					300<2-\frac{r}{2}+\frac{3\cdot 58\cdot\log(3\cdot 58)}{(r+1)\log(1.83)},
				\end{equation*}
				which gives $r\leq 3$. Since $n+r+17\leq 4033+3+17=4053$, we can see that $\nu_2\left(T_{n+i}\right)\leq 12$ for all $0\leq i\leq r$. Thus, we have $x\leq 12(r+1)\leq 48$. By \eqref{eqn13} and the fact that $x\leq 48$, we have
				\begin{equation*}
					300<2-\frac{r}{2}+\frac{3\cdot 48\cdot\log(3\cdot 48)}{(r+1)\log(1.83)},
				\end{equation*}
				which gives $r\leq 2$. Thus, we have $x\leq 12\cdot 3=36$. By \eqref{eqn13} and the fact that $x\leq 36$, we have
				\begin{equation*}
					300<2-\frac{r}{2}+\frac{3\cdot 36\cdot\log(3\cdot 36)}{(r+1)\log(1.83)},
				\end{equation*}
				giving $r=1$. Hence, $x\leq 12\cdot 2=24$. By \eqref{eqn13} and the fact that $x\leq 24$, we have
                \begin{equation*}
					300<2-\frac{r}{2}+\frac{3\cdot 24\cdot\log(3\cdot 24)}{(r+1)\log(1.83)},
				\end{equation*}
                giving $r<1$, eliminating this case.
			\end{case}
			Hence, we have $n\leq 299$. Since $x\leq 246$, we have $\nu_2\left(m_k!\right)\leq 246$. Thus, we have $m_k\leq 253$. Thus, we have $P\left(T_n\cdots T_{n+r}\right)\leq 251$. Checking via Maple, we see that the only Tribonacci numbers whose largest prime factor is at most 251 are $T_i$ for $2\leq i\leq 19$ and $i=28,31,34,35$. Then either $n+r\leq 19$ or $n=34$ and $r=1$. By Maple, we have
			\begin{equation*}
				T_{34}T_{35}=2\cdot 3^5\cdot 11\cdot 41\cdot 53^2\cdot 103^2\cdot 139\cdot 227,
			\end{equation*}
			eliminating that case. By Maple, we can see that $17$ does not divide $T_n$ for $2\leq n\leq 19$. Therefore, we have $m_k\leq 16$ and $P\left(T_nT_{n+1}\cdots T_{n+r}\right)\leq 13$. Again with the help of Maple we can see that $n+r\leq 9$. Therefore, we have $x\leq 8$. So $n+r\leq 5$, but then $3$ does not divide $T_n\cdots T_{n+r}$. So $T_n\cdots T_{n+r}$ is a power of $2$ and the only solution is $T_3T_4=2!\cdot 2!$.
		\end{proof}
		
\subsection{Proof of Theorem \ref{Thm-2}}
\begin{proof}
Let $n,r\in\mathbb{N}$ with $n\geq 5$ and assume that
\begin{equation*}
	T_{-n}T_{-n-1}T_{-n-2}\cdots T_{-n-r}=m_1!m_2!\cdots m_k!,
\end{equation*}
where $2\leq m_1\leq m_2\leq\ldots\leq m_k$.\\

We first note that $T_{-17}=0$. Therefore, if $n\leq 16$, we must have $n+r\leq 16$. The solutions in this case are those state in the theorem. Therefore, we may assume that $n\geq 18$.
\newline
		
For all $m\geq 18$, from \eqref{Binet} we have
		\begin{align}
			\left|T_{-m}\right|&=\left|a\alpha^{-m}+b\beta^{-m}+c\gamma^{-m}\right|\nonumber\\
			&\geq\left|b\beta^{-m}+c\gamma^{-m}\right|-a\alpha^{-m}\nonumber\\
			&\geq 2\left|\operatorname{Re}\left(b\beta^{-m}\right)\right|-a\alpha^{-18}\nonumber\\
			&>2\left|\operatorname{Re}\left(b\beta^{-m}\right)\right|-4\cdot 10^{-6}\nonumber\\
			&=2\left|b\beta^{-m}\right|\cdot\left|\cos\left(\arg\left(b\beta^{-m}\right)\right)\right|-4\cdot 10^{-6}\nonumber\\
			&=2|b||\beta|^{-m}\left|\cos\left(\arg b-m\arg\beta\right)\right|-4\cdot 10^{-6}\nonumber\\
			&>0.7\cdot 0.74^{-m}\left|\cos\left(\arg b-m\arg\beta\right)\right|-4\cdot 10^{-6}.\label{eqn17}
		\end{align}

        First suppose that $\left|T_{-m}\right|\leq 0.31\cdot 0.74^{-m}$. We will show that $\left|T_{-m-1}\right|>0.31\cdot 0.74^{-m-1}$ and $\left|T_{-m-2}\right|>0.31\cdot 0.74^{-m-2}$. If $\left|\cos\left(\arg b-m\arg\beta\right)\right|>0.46$, then \eqref{eqn17} gives 
		
		\begin{align}
			\left|T_{-m}\right|&>0.7\cdot 0.74^{-m}\cdot 0.46-4\cdot 10^{-6}\nonumber\\
			&>0.32\cdot 0.74^{-m}-4\cdot 10^{-6}\nonumber\\
			&>0.31\cdot 0.74^{-m}\label{eqn18}.
		\end{align}
		
		Hence, $\left|\cos\left(\arg b-m\arg\beta\right)\right|\leq 0.46$. Notice that for any real number $0<x<4$ such that $|\cos x|\leq 0.46$ we must have that $\left|x-\frac{\pi}{2}\right|<0.48$. From this fact it follows that there exists an integer $M$ such that

        \begin{equation}\label{eqn36}
        \left|\arg b-m\arg\beta-\frac{\pi}{2}+M\pi\right|<0.48.
        \end{equation}
		
		From the facts that $0.96<\pi-\arg\beta<0.97$ and $2.53<\frac{3\pi}{2}-\arg\beta<2.54$ and from \eqref{eqn36} we can therefore conclude that
		
		\begin{equation*}
			0.48<\arg b-(m+1)\arg\beta-\frac{\pi}{2}+(M+1)\pi<1.45,
		\end{equation*}
		
		and
		
		\begin{equation*}
			2.05<\arg b-(m+1)\arg\beta+(M+1)\pi<3.02.
		\end{equation*}

        Hence, 
        
        \begin{equation*}
            \left|\cos\left(\arg b-(m+1)\arg\beta\right)\right|=\left|\cos\left(\arg b-(m+1)\arg\beta\right)+(M+1)\pi\right|>0.46.
        \end{equation*}
        
        Thus, using the same argument in arriving at \eqref{eqn18}, we have
		
		\begin{equation}\label{eqn27}
			\left|T_{-m-1}\right|>0.31\cdot 0.74^{-m-1}
		\end{equation}

        Similarly, we can also see that 

        \begin{equation*}
			1.44<\arg b-(m+2)\arg\beta-\frac{\pi}{2}+(M+2)\pi<2.42,
		\end{equation*}
		
		and
		
		\begin{equation*}
			3.01<\arg b-(m+2)\arg\beta+(M+2)\pi<3.99.
		\end{equation*}

        Hence, 
        
        \begin{equation*}
            \left|\cos\left(\arg b-(m+2)\arg\beta\right)\right|=\left|\cos\left(\arg b-(m+2)\arg\beta\right)+(M+2)\pi\right|>0.46.
        \end{equation*}
		
		Thus,
		
		\begin{equation}\label{eqn28}
			\left|T_{-m-2}\right|>0.31\cdot 0.74^{-m-2}.
		\end{equation}
		
		Let $n\geq 18$ and $r\geq 1$ and consider $\left|T_{-n}T_{-n-1}\cdots T_{-n-r}\right|$. The above argument shows us that in the sequence $T_{-18},T_{-19},T_{-20},\ldots$ every three consecutive terms must contain at least two terms satisfying the inequality $\left|T_{-m}\right|>0.31\cdot 0.74^{-m}$. Also, for any terms not satisfying this inequality we have $\left|T_{-m}\right|\geq 1$. Thus,
		
		\begin{align}
			&\quad\left|T_{-n}T_{-n-1}\cdots T_{-n-r}\right|\nonumber\\
			&>0.31^{r+1-\lfloor\frac{r+1}{3}\rfloor}\cdot 0.74^{-n-(n+1)-(n+2)-\ldots-(n+r)+(n+2)+(n+5)+\ldots+\left(n-1+3\lfloor\frac{r+1}{3}\rfloor\right)+n}\nonumber\\
			&\geq 0.31^{r+1-\frac{r-1}{3}}\cdot 0.74^{-nr-\frac{r(r+1)}{2}+(n-1)\lfloor\frac{r+1}{3}\rfloor+\frac{3}{2}\lfloor\frac{r+1}{3}\rfloor\lfloor\frac{r+4}{3}\rfloor}\nonumber\\
			&\geq 0.31^{\frac{2(r+2)}{3}}\cdot 0.74^{-nr-\frac{r(r+1)}{2}+\frac{(n-1)(r+1)}{3}+\frac{(r+1)(r+4)}{6}}\nonumber\\
			&=0.31^{\frac{2(r+2)}{3}}\cdot 0.74^{-\frac{n(2r-1)+r^2-1}{3}}\label{eqn19}.
		\end{align}
		Letting $x$ be as in \eqref{eqn1.5} we have
		\begin{equation}\label{eqn20}
			2(r+2)\cdot\log(0.31)-\left(n(2r-1)+r^2-1\right)\log(0.74)<9x\log(3x)
		\end{equation}
		the analogous inequality to \eqref{eqn3}. Following the argument as before we obtain 
		\begin{align}
			x&=\nu_2\left|T_{-n}T_{-n-1}T_{-n-2}\cdots T_{-n-r}\right|\nonumber\\
			&=15\left\lceil\frac{r+1}{16}\right\rceil+\frac{3(r+1)}{16}+12\left\lceil\frac{r+1}{32}\right\rceil+\frac{4\log(n+r)}{\log 2}-18\label{eqn20.5}\\
			&\leq\frac{3(r+1)}{2}+9+\frac{4\log(n+r)}{\log 2}\label{eqn21}.
		\end{align}
		Thus,
		\begin{equation}\label{eqn22}
			2^{x/4-3(r+1)/8-9/8}-r\leq n.
		\end{equation}
		From \eqref{eqn20} we have
		\begin{equation}\label{eqn23}
			n<\frac{-9x\log(3x)}{(2r-1)\log(0.74)}+\frac{6\log(0.31)}{\log(0.74)}-\frac{r^2-1}{2r-1}.
		\end{equation}
		Combining \eqref{eqn22} and \eqref{eqn23} we have
		\begin{align}
			2^{x/4-3(r+1)/8-9/8}&<\frac{-9x\log(3x)}{(2r-1)\log(0.74)}+\frac{6\log(0.31)}{\log(0.74)}+\frac{r^2-r+1}{2r-1}\nonumber\\
			&=\frac{-9x\log(3x)}{(2r-1)\log(0.74)}+\frac{6\log(0.31)}{\log(0.74)}+\frac{r}{2}-\frac{1}{4}+\frac{3}{4(2r-1)}\label{eqn24}.
		\end{align}
		From \eqref{eqn20} we have 
		\begin{equation*}
			2(r+2)\cdot\log(0.31)-\left(18(2r-1)+r^2-1\right)\log(0.74)<9x\log(3x),
		\end{equation*}
		which gives
		\begin{equation*}
			\left(r+18-\frac{\log(0.31)}{\log(0.74)}\right)^2<-\frac{9x\log(3x)}{\log(0.74)}+\frac{4\log(0.31)}{\log(0.74)}+19+\left(18-\frac{\log(0.31)}{\log(0.74)}\right)^2,
		\end{equation*}
		from which we can deduce
		\begin{equation}\label{eqn25}
			r<\sqrt{30x\log(3x)+234}-14.
		\end{equation}
		Combining \eqref{eqn24} and \eqref{eqn25} gives
		\begin{equation*}
			2^{x/4-3(\sqrt{30x\log(3x)+234}-13)/8-9/8}<\frac{-9x\log(3x)}{\log(0.74)}+\frac{6\log(0.31)}{\log(0.74)}+\frac{\sqrt{30x\log(3x)+234}-14}{2}+\frac{1}{2},
		\end{equation*}
		which gives $x\leq 609$, so that $r\leq 356$. From \eqref{eqn23} we have
		\begin{equation}\label{eqn26}
			n<\frac{-9\cdot 609\log(3\cdot 609)}{(2\cdot 1-1)\log(0.74)}+\frac{6\log(0.31)}{\log(0.74)},
		\end{equation}
		which gives $n\leq 136735$.
		\begin{case}{$n\geq 500$}
			\newline
			Here we eliminate the case $n\geq 500$. By \eqref{eqn23} and the fact that $x\leq 609$, we have
			\begin{equation*}
				500<\frac{-9\cdot 609\log(3\cdot 609)}{(2r-1)\log(0.74)}+\frac{6\log(0.31)}{\log(0.74)}-\frac{r^2-1}{2r-1},
			\end{equation*}
			which gives $r\leq 126$. By \eqref{eqn20.5} and \eqref{eqn25}, we have
			\begin{equation*}
				x\leq 15\cdot\left\lceil\frac{127}{16}\right\rceil+\frac{3\cdot 127}{16}+12\left\lceil\frac{127}{32}\right\rceil+\frac{4\log(136735+126)}{\log 2}-18<216.
			\end{equation*}
			By \eqref{eqn23} and the fact that $x\leq 215$, we have
			\begin{equation*}
				n<\frac{-9\cdot 215\log(3\cdot 215)}{(2\cdot 1-1)\log(0.74)}+\frac{6\log(0.31)}{\log(0.74)}<41597
			\end{equation*}
			and
			\begin{equation*}
				500<\frac{-9\cdot 215\log(3\cdot 215)}{(2r-1)\log(0.74)}+\frac{6\log(0.31)}{\log(0.74)}-\frac{r^2-1}{2r-1},
			\end{equation*}
			which gives $r\leq 42$. By \eqref{eqn20.5} and \eqref{eqn25}, we have
			\begin{equation*}
				x\leq 15\cdot\left\lceil\frac{43}{16}\right\rceil+\frac{3\cdot 43}{16}+12\left\lceil\frac{43}{32}\right\rceil+\frac{4\log(41596+42)}{\log 2}-18<121.
			\end{equation*}
			By \eqref{eqn23} and the fact that $x\leq 120$, we have
			\begin{equation*}
				n<\frac{-9\cdot 120\log(3\cdot 120)}{(2\cdot 1-1)\log(0.74)}+\frac{6\log(0.31)}{\log(0.74)}<21136
			\end{equation*}
			and
			\begin{equation*}
				500<\frac{-9\cdot 120\log(3\cdot 120)}{(2r-1)\log(0.74)}+\frac{6\log(0.31)}{\log(0.74)}-\frac{r^2-1}{2r-1},
			\end{equation*}
			which gives $r\leq 22$. By \eqref{eqn20.5} and \eqref{eqn25}, we have
			\begin{equation*}
				x\leq 15\cdot\left\lceil\frac{23}{16}\right\rceil+\frac{3\cdot 23}{16}+12\left\lceil\frac{23}{32}\right\rceil+\frac{4\log(21135+22)}{\log 2}-18<86.
			\end{equation*}
			By \eqref{eqn23} and the fact that $x\leq 85$, we have
			\begin{equation*}
				n<\frac{-9\cdot 85\log(3\cdot 85)}{(2\cdot 1-1)\log(0.74)}+\frac{6\log(0.31)}{\log(0.74)}<14102
			\end{equation*}
			and
			\begin{equation*}
				500<\frac{-9\cdot 85\log(3\cdot 85)}{(2r-1)\log(0.74)}+\frac{6\log(0.31)}{\log(0.74)}-\frac{r^2-1}{2r-1},
			\end{equation*}
			which gives $r\leq 15$. By \eqref{eqn20.5} and \eqref{eqn25}, we have
			\begin{equation*}
				x\leq 15\cdot\left\lceil\frac{15}{16}\right\rceil+\frac{3\cdot 15}{16}+12\left\lceil\frac{15}{32}\right\rceil+\frac{4\log(14101+15)}{\log 2}-18<67.
			\end{equation*}
			By \eqref{eqn23} and the fact that $x\leq 66$, we have
			\begin{equation*}
				n<\frac{-9\cdot 66\log(3\cdot 66)}{(2\cdot 1-1)\log(0.74)}+\frac{6\log(0.31)}{\log(0.74)}<10456
			\end{equation*}
			and
			\begin{equation*}
				500<\frac{-9\cdot 66\log(3\cdot 66)}{(2r-1)\log(0.74)}+\frac{6\log(0.31)}{\log(0.74)}-\frac{r^2-1}{2r-1},
			\end{equation*}
			which gives $r\leq 11$. At this point, instead of using \eqref{eqn20.5} and \eqref{eqn25} again, we may simply use Lemma \ref{ML-Tribo-val} with the observation that $n+r\leq 10456+11<2^{14}$ to conclude that $x\leq 14+2\cdot 12+3+2+1=44$. Then, by \eqref{eqn23}, we have
			\begin{equation*}
				n<\frac{-9\cdot 44\log(3\cdot 44)}{(2\cdot 1-1)\log(0.74)}+\frac{6\log(0.31)}{\log(0.74)}<6445
			\end{equation*}
			and
			\begin{equation*}
				500<\frac{-9\cdot 44\log(3\cdot 44)}{(2r-1)\log(0.74)}+\frac{6\log(0.31)}{\log(0.74)}-\frac{r^2-1}{2r-1},
			\end{equation*}
			which gives $r\leq 7$. Since $n+r<2^{13}$, we have $x\leq 13+2\cdot 11+1=36$. Then, by \eqref{eqn23}, we have
			\begin{equation*}
				n<\frac{-9\cdot 36\log(3\cdot 36)}{(2\cdot 1-1)\log(0.74)}+\frac{6\log(0.31)}{\log(0.74)}<5062
			\end{equation*}
			and
			\begin{equation*}
				500<\frac{-9\cdot 36\log(3\cdot 36)}{(2r-1)\log(0.74)}+\frac{6\log(0.31)}{\log(0.74)}-\frac{r^2-1}{2r-1},
			\end{equation*}
			which gives $r\leq 5$. So $x\leq 13+2\cdot 11=35$. From $x\leq 35$, we deduce $m_k\leq 39$. From \eqref{eqn19}, we have
			\begin{equation*}
				\left|T_{-n}T_{-n-1}\cdots T_{-n-r}\right|>0.31^{r+1-\frac{r-1}{3}}\cdot 0.74^{-500r-\frac{r(r+1)}{2}+499\lfloor\frac{r+1}{3}\rfloor+\frac{3}{2}\lfloor\frac{r+1}{3}\rfloor\lfloor\frac{r+4}{3}\rfloor}
			\end{equation*}
			Evaluating the above lower bound for $1\leq r\leq 5$, we find
			\begin{equation*}
				\left|T_{-n}T_{-n-1}\cdots T_{-n-r}\right|>3\cdot 10^{64}.
			\end{equation*}
			Notice that
			\begin{equation*}
				\frac{\log m_1!+\ldots+\log m_k!}{x\log 2}\geq \frac{\log\left|T_{-n}T_{-n-1}\cdots T_{-n-r}\right|}{35\log 2}>\frac{\log\left(3\cdot 10^{64}\right)}{35\log 2}>6.11.
			\end{equation*}
			Hence, there exists some $1\leq m\leq 39$ such that
			\begin{equation*}
				\frac{\log m!}{\nu_2(m!)\log 2}>6.11.
			\end{equation*}
			Maple shows that no such $m$ exists, eliminating the case $n\geq 500$.
		\end{case}
		Hence, we have $n\leq 499$. Since $x\leq 609$, we have $\nu_2\left(m_k!\right)\leq 609$. Thus, we have $m_k\leq 615$. Therefore, $P\left(\left|T_{-n}T_{-n-1}\cdots T_{-n-r}\right|\right)\leq 613$. Checking via Maple, we see that the only Tribonacci numbers $T_{-m}$, where $18\leq m\leq 499$ whose largest prime factor is at most $613$ are $T_{-m}$ for $18\leq m\leq 28$, $m=30$, $32\leq m\leq 36$, $m=38$, $40\leq m\leq 41$, $m=43$, $m=46$, $49\leq m\leq 52$, $55\leq m\leq 57$, $m=63$, $65\leq m\leq 66$, and $68\leq m\leq 69$. 
		\newline
		\newline
		For all $18\leq m\leq 28$, $49\leq m\leq 52$, and $55\leq m\leq 57$, we see that $5\nmid T_{-m}$, but there exists a prime factor $p\geq 7$ such that $p\mid T_{-m}$.
		\newline
		\newline
		For all $32\leq m\leq 36$, we see that $11\nmid T_{-m}$. For all $34\leq m\leq 36$ there exists a prime factor $p\geq 13$ such that $p\mid T_{-m}$. Also, we have $5^2\mid\left|T_{-32}T_{-33}\right|$, but $3^2\nmid\left|T_{-32}T_{-33}\right|$.
		\newline
		\newline
		Also, we have $7\nmid\left|T_{-40}T_{-41}\right|$, but $23\mid \left|T_{-40}T_{-41}\right|$.
		\newline
		\newline
		As well, we have $11\nmid\left|T_{-65}T_{-66}\right|$, but $19\mid \left|T_{-65}T_{-66}\right|$.
		\newline
		\newline
		Finally, we have $5\nmid\left|T_{-68}T_{-69}\right|$, but $7\mid \left|T_{-68}T_{-69}\right|$.
		\newline
		\newline
		Hence, there are no solutions to \ref{Tribo-2} with $r\geq 1$ and $n\geq 18$, and this finishes the proof.
\end{proof}
\subsection{Proof of Theorem \ref{Thm-3}}
		\begin{proof}
Assume that
\begin{equation*}
	T_nT_{n+d}T_{n+2d}\cdots T_{n+rd}=m_1!m_2!\cdots m_k!,
\end{equation*}
with $2\leq m_1\leq m_2\leq\ldots\leq m_k$, $d\geq 2$, and $n\geq 3$. Then, from Lemma \ref{first-bound},
\begin{equation*}
	\left(n-2+\frac{rd}{2}\right)\left(r+1\right)<\frac{m_1\log m_1+\ldots+m_k\log m_k}{\log(1.83)}<\frac{3x\log(3x)}{\log(1.83)}.
\end{equation*}			
			Also, we have
			\begin{equation*}
				\nu_2\left(T_nT_{n+d}T_{n+2d}\cdots T_{n+rd}\right)=\nu_2\left(T_n\right)+\nu_2\left(T_{n+d}\right)+\ldots+\nu_2\left(T_{n+rd}\right).
			\end{equation*}
			By Lemma \ref{ML-Tribo-val}, we can see that for all $0\leq i\leq r$, we have
            \begin{equation*}
            \nu_2\left(T_{n+di}\right)\leq\frac{\log(n+dr+17)}{\log 2}+1.
            \end{equation*}
            Thus,
			\begin{equation}\label{eqn30}
				x=\sum_{i=0}^r\nu_2\left(T_{n+di}\right)\leq(r+1)\left(\frac{\log(n+rd+17)}{\log 2}+1\right).
			\end{equation}
			Rearranging \eqref{eqn30} we have
			\begin{equation}\label{eqn31}
				2^{x/(r+1)-1}-rd-17\leq n.
			\end{equation}
			Also, \eqref{eqn29} gives
			\begin{equation}\label{eqn32}
				n<2-\frac{rd}{2}+\frac{3x\log(3x)}{(r+1)\log(1.83)},
			\end{equation}
			so that 
			\begin{equation}\label{eqn33}
				rd<\frac{3x\log(3x)}{\log(1.83)}.
			\end{equation}
			From \eqref{eqn29} we have
			\begin{equation*}
				\frac{r^2d}{2}<\frac{3x\log(3x)}{\log(1.83)},
			\end{equation*}
			so that
			\begin{equation}\label{eqn34}
				r<\sqrt{\frac{6x\log(3x)}{d\log(1.83)}}.
			\end{equation}
			Combining \eqref{eqn31}-\eqref{eqn34}, we have
			\begin{equation*}
				2^{\frac{x}{\left(\sqrt{\frac{6x\log(3x)}{d\log(1.83)}}+1\right)}-1}<19+\frac{x\log(3x)}{2\log(1.83)}+\frac{3x\log(3x)}{(r+1)\log(1.83)},
			\end{equation*}
			which implies $x\leq 25769$. Therefore, \eqref{eqn34} implies $r\leq 1199$. As well, \eqref{eqn33} implies $rd\leq 1439862$. By \eqref{eqn32}, we have
			\begin{equation}\label{final}
				n<2-\frac{1}{2}+\frac{3\cdot 25769\cdot\log(3\cdot 25769)}{2\cdot\log(1.83)}<719933.
			\end{equation}
			
		\end{proof}

\section{Acknowledgments}
This paper was supported by Title III, New Faculty Funding by Xavier University of Louisiana and by MT Internal Grant Opportunities from Middle Tennessee State University.

\printbibliography
\end{document}